\newtheorem{theorem}{Theorem}
\newtheorem{lemma}[theorem]{Lemma}
\newtheorem{problem}[theorem]{Problem}
\def\adots{\mathinner{\mkern2mu\raise0pt\hbox{.}  
\mkern2mu\raise4pt\hbox{.}\mkern1mu
\raise7pt\vbox{\kern7pt\hbox{.}}\mkern1mu}}
\newcommand{\V}{\mathcal{V}}
\def\tu{{\rm Tur{\'a}n\,}}
\date{ }
\begin{document}
\title{Extremal digraphs containing at most $t$ paths of length 2 with the same endpoints}
\author{ Zejun Huang \thanks{ School of Mathematical Sciences, Shenzhen University, Shenzhen 518060, P.R. China. (mathzejun@gmail.com)}, Zhenhua Lyu\thanks{ School of Science, Shenyang Aerospace University, Shenyang, 110136, China. (lyuzhh@outlook.com)}  } \maketitle

\begin{abstract}
Given a positive integer  $t$, let $P_{t,2}$ be the digraph consisting of $t$ directed paths of length 2 with the same initial and  terminal vertices.
In this paper, we study  the maximum  size of $P_{t+1,2}$-free digraphs of order $n$, which is denoted by  $ex(n, P_{t+1,2})$.  For sufficiently large $n$, we prove that $ex(n, P_{t+1,2})=g(n,t)$ when $\lfloor(n-t)/{2} \rfloor$ is odd and $ex(n, P_{t+1})\in \{g(n,t)-1, g(n,t)\}$  when $\lfloor(n-t)/{2} \rfloor$ is even, where $g(n,t)=\left\lceil(n+t)/{2}\right\rceil \left\lfloor(n-t)/{2}\right\rfloor+tn+1$.
\end{abstract}

{\bf Key words:}
digraph, path,  \tu number

{\bf AMS  subject classifications:} 05C35, 05C20
\section{ Introduction and main results}
For a digraph  $D$ with  vertex set $V$ and arc set $A$,  we call $|V|$  its
{\it order} and $|A|$ its {\it size}.
We denote by  $uv$ the arc from $u$ to $v$.  A sequence of
consecutive arcs $v_1v_2, v_2v_3,\ldots, v_{t-1}v_t$ is called a {\it directed walk}   from $v_1$ to $v_t$, denoted $v_1v_2v_3 \cdots v_{t-1}v_t$. A {\it directed path}  is a directed walk whose vertices are distinct. A {\it directed cycle} is a closed directed walk $v_1v_2\cdots v_tv_1$ with $v_1,v_2\ldots,v_t$ being distinct. A directed walk (or cycle, path)  of length $k$ is called a {\it $k$-walk (or $k$-cycle, $k$-path)}.  If two arcs between a pair of vertices  have the same direction, we say they are {\it parallel}. Digraphs in this paper are strict, i.e., they do not allow loops or parallel arcs.

	
The Tur\'an type problem plays      an important role in graph theory. It concerns the largest possible number of edges in graphs without given subgraphs and the extremal graphs achieving that maximum number of edges. The study of these  problems can be dated back to 1907, when Mantel \cite{WM} characterized   triangle-free graphs  attaining the maximum size. In \cite{PT,PT2}, Tur\'an initiated this kind of problems by generalizing
Mantel's theorem to  graphs containing no complete subgraph of a fixed order. Most results on classical \tu type problems concern undirected graphs and only a few \tu type problems on digraphs have been investigated. 

The systematic investigation of extremal problems on digraphs was initiated by Brown and Harary \cite{BH}. They determined the precise maximum size and extremal digraphs for digraphs avoiding a complete digraph of a given order. They also studied digraphs avoiding a tournament, a direct sum of two tournaments, or a digraph on at most 4 vertices where every two vertices are joined by at least one arc. H$\ddot{\rm a}$ggkvist and Thomassen \cite{HT}, Zhou and Li \cite{zl} characterized the extremal digraphs avoiding a  directed cycle. By using dense matrices, Brown, Erd\H os and Simonovits \cite{BES,BES2,BES3} presented asymptotic results on  extremal digraphs avoiding a family of digraphs. Tuite, Erskine and Salia \cite{tes} investigated the digraphs with a unique walk of length at most $k$ between any pair of vertices (not necessarily distinct).  Extremal digraphs
avoiding certain distinct walks or paths with the same initial vertex and terminal vertex were studied in \cite{HDT1,HDT2, HL1,HL2,HL3,HL4,HL,HZ,lyu2,lyu4,lyu3,MRT,HW}. Especially, following the direction of Brown and Harary, the authors of \cite{HL3} studied  a Tur\'an problem on an orientation of the 4-cycle. They  characterized the extremal digraphs avoiding two directed 2-paths with the same initial vertex and the same terminal vertex.  Lyu \cite{lyu} determined the  extremal digraphs avoiding an orientation of the diamond graph.

 In this paper, we study the maximum size of digraphs avoiding  a given number of 2-paths with the same initial vertex and terminal vertex.
	Denote by $P_{k,2}$  the digraph that consists of $k$   paths of length $2$ with the same initial vertex and terminal vertex; see Figure 1.
	\begin{figure}[H]
		\centering
		\includegraphics[width=2.in]{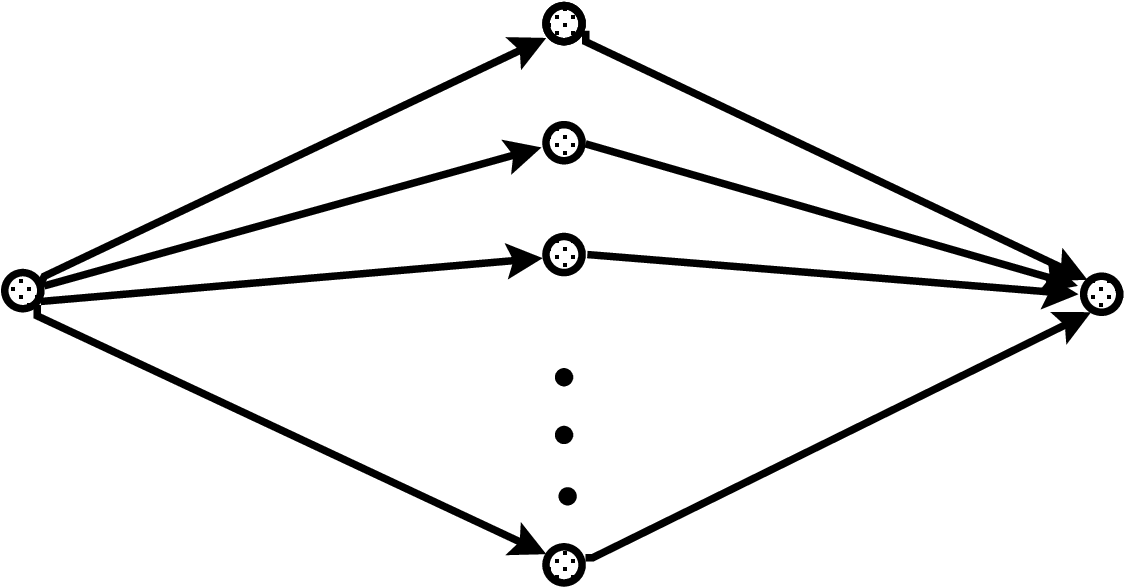}\hspace{0.8cm}\\
		Figure 1:~$P_{k,2}$
	\end{figure}
Let $ex(n,P_{k,2})$ be  the \tu number of $P_{k,2}$, i.e., the maximum size  of $P_{k,2}$-free digraphs of order $n$, and let  $EX(n,P_{k,2})$ be the set of $P_{k,2}$-free  digraphs attaining the size $ex(n,P_{k,2})$. Our problem can be formulated as follows.
\begin{problem}
Given positive integers $n$ and $k$. Determine $ex(n,P_{k,2})$.
\end{problem}

Notice that when $k=2$, the problem was solved in \cite{HL3}. We consider the cases $k\ge 3$ in this paper. Denote by
\begin{equation*}\label{eq0}
		g(n,t)=
		\left\lceil\frac{n+t}{2}\right\rceil \left\lfloor\frac{n-t}{2}\right\rfloor+tn+1.
		\end{equation*}
Our main result is the following.

\begin{theorem}\label{th1}
 Let $n,t$ be  positive integers such that  $t\ge 2$ and $n\ge \max\{t^3+4t^2+3t+4,17t^2/2+30t+27\}$.
\begin{itemize}
  \item[(i)] If $ \lfloor({n-t})/{2} \rfloor$ is  odd, then $ex(n,P_{t+1,2})=g(n,t)$;
  \item[(ii)] if $ \lfloor({n-t})/{2} \rfloor$ is even, then $ex(n,P_{t+1,2})\in\{g(n,t)-1,g(n,t)\}$.
\end{itemize}
\end{theorem}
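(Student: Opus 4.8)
The plan is to prove the matching bounds $ex(n,P_{t+1,2}) \ge g(n,t)$ (resp.\ $\ge g(n,t)-1$) and $ex(n,P_{t+1,2}) \le g(n,t)$ separately. I would work throughout with the adjacency matrix $A\in\{0,1\}^{n\times n}$ of a digraph $D$ (zero diagonal, since $D$ is strict), using the translation that for $u\ne v$ the number of $2$-paths from $u$ to $v$ is exactly $(A^2)_{uv}$. Thus $D$ is $P_{t+1,2}$-free if and only if $(A^2)_{uv}\le t$ for every ordered pair $u\ne v$, and the size of $D$ equals $\mathbf 1^{T}A\mathbf 1$.

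For the lower bound I would exhibit an explicit digraph. Partition $V$ into $X$ with $|X|=\lceil(n+t)/2\rceil$ and $Y$ with $|Y|=\lfloor(n-t)/2\rfloor$, take all arcs from $X$ to $Y$ (contributing the leading term $\lceil(n+t)/2\rceil\lfloor(n-t)/2\rfloor$), fix a $t$-subset $T\subseteq X$ carrying a complete digraph, direct every vertex of $X$ to all of $T$, and add back-arcs from $Y$ to $X$ so that every vertex of $Y$ has out-degree $\le t$ into $X$ and every vertex of $X$ has in-degree $\le t$ from $Y$. These auxiliary families contribute about $tn$ further arcs. Verifying $P_{t+1,2}$-freeness then reduces to checking $(A^2)_{uv}\le t$ for the constantly many types of ordered pairs (source--sink, source--source, pairs meeting $T$, and so on); the dominant source--sink pairs realize exactly $t$ two-paths through $T$, so the bound is tight there. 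The delicate point is extracting the final additive constant: a naive version of this construction falls a few arcs short of $g(n,t)$, and recovering the exact count with the extra unit ``$+1$'' requires a parity-sensitive choice of the core and the back-arcs. This is precisely where the parity of $\lfloor(n-t)/2\rfloor$ enters and explains why only $g(n,t)-1$ is guaranteed when it is even.

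For the upper bound the starting point is the double-counting identity obtained by summing the local constraints. Since $\sum_{u\ne v}(A^2)_{uv}=\sum_{w}d^-(w)d^+(w)-2\,(\#\text{digons})$, the $P_{t+1,2}$-free condition gives
\[
 \sum_{w\in V} d^-(w)\,d^+(w)\;\le\; t\,n(n-1)+2\,(\#\text{digons}).
\]
The main obstacle is that this global inequality is far from tight for the extremal examples — their left-hand side is of order $tn^2/4$ while the budget is of order $tn^2$ — so it cannot by itself force the sharp bound. The plan is therefore to combine it with a stability/structural analysis: split $V$ into \emph{senders} ($d^+\ge d^-$) and \emph{receivers} ($d^+<d^-$), show that almost all arcs form a complete-bipartite backbone from senders to receivers, bound this backbone by $\lceil(n+t)/2\rceil\lfloor(n-t)/2\rfloor$ (the size imbalance of about $t$ being forced by the need to keep the same-side and back-arcs within budget), and then bound the remaining irregular arcs by $tn+O(1)$ using the per-vertex consequences of $(A^2)_{uv}\le t$ (each vertex can support only $\le t$ within- or back-arcs before some pair exceeds $t$ two-paths). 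Assembling these estimates and controlling the $O(1)$ discrepancy should yield $\mathbf 1^{T}A\mathbf 1\le g(n,t)$.

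I expect the hardest step to be the exact rather than asymptotic bookkeeping in the upper bound: converting the soft local constraints into a loss of the precise $t^2$-order terms against the full $\lfloor n^2/4\rfloor$ bipartite maximum, while simultaneously pinning the additive constant to within one unit. The strong hypothesis $n\ge\max\{t^3+4t^2+3t+4,\,17t^2/2+30t+27\}$ is the signature of such a stability argument, in which the error terms accumulated during the structural step are polynomial in $t$ and must be absorbed; the residual parity obstruction is exactly what prevents closing the even case completely.
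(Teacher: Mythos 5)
Your proposal has genuine gaps in both halves, at exactly the decisive points. On the lower bound, the construction you sketch is neither verified to be $P_{t+1,2}$-free nor large enough, and as written it can actually fail freeness: for $u\in X\setminus T$ and $u'\in T$ you already get $t-1$ two-paths $u\to s\to u'$ through the complete core, and nothing in your description keeps the heads of the back-arcs from $Y$ out of $T$, so up to $t$ further two-paths $u\to y\to u'$ can appear, giving as many as $2t-1>t$. Moreover your own count, $|X||Y|+t(t-1)+t(|X|-t)+t|Y|\approx |X||Y|+tn-t$, is about $t+1$ arcs below the target $|X||Y|+tn+1$, and you defer the repair to an unspecified ``parity-sensitive choice'' --- but that repair \emph{is} the content of the lemma. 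The paper's extremal families are structurally different: the core is a complete digraph on $t+1$ vertices (not $t$), the back-arcs enter only the source part with each source vertex receiving exactly $t$ of them, and in the odd case (where the extra ``$+1$'' must be realized) the two large parts additionally carry perfect matchings of disjoint $2$-cycles together with several carefully wired single vertices, balanced so that the critical ordered pairs see exactly $t$ two-paths. None of this is a small perturbation of your sketch.

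On the upper bound, what you give is a program rather than a proof, and it omits the mechanism the paper actually uses. The paper does not split $V$ into senders and receivers: it fixes one vertex $v$ of maximum out-degree $\Delta$ (with $\Delta^+\ge\Delta^-$) minimizing $\tau(v)=|N^+(v)\cap N^-(v)|$, notes that every vertex other than $v$ has at most $t$ predecessors in $V_1=N^+(v)$, so $e(V_1,V\setminus\{v\})\le t(n-1)$, and then the whole difficulty is concentrated in its key lemma: $\Delta=\left\lceil (n+t)/2\right\rceil$ \emph{exactly}, and $\tau(v)\le t+1$. That lemma is proved by a long bootstrap against the lower bound $\phi(n,t)$ --- rough bounds on $\Delta$, then $\alpha_2\le 2t+3$, $\beta=\Delta$, $\tau\le 3t+4$, refined to $\alpha_2\le t$ and $\tau\le 2t$, a pigeonhole producing a vertex of in-degree $n-\Delta+t$ which with $\Delta^+\ge\Delta^-$ forces $\Delta\ge\left\lceil (n+t)/2\right\rceil$, and a separate intricate contradiction for $\tau\le t+1$ --- after which the theorem is a three-line count, $e(D)\le t(n-1)+(t+1)+(n-\Delta)\Delta=g(n,t)$. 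You correctly observe that the global identity $\sum_w d^-(w)d^+(w)\le tn(n-1)+2(\#\text{digons})$ is an order of magnitude too weak, but your replacement (``almost all arcs form a complete-bipartite backbone,'' ``pin the additive constant to within one unit'') is precisely the unproved assertion: you supply no quantitative route to the exact side lengths $\left\lceil (n+t)/2\right\rceil$ and $\left\lfloor (n-t)/2\right\rfloor$, nor to the bound $\tau\le t+1$ that delivers the constant $+1$. So the hardest steps you flag are not merely hard bookkeeping --- they are missing ideas.
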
	
	

\section{Proofs}

  Let  $D=(V,A)$ be a digraph with  vertex set $V$ and arc set $A$. For $u,v\in V$, if there is an arc from $u$ to $v$, then we say $v$ is a {\it successor} of $u$, and $u$ is a  {\it predecessor} of $v$.  Given $S,T\subseteq V$,   the following notations will be used repeatedly.

\begin{itemize}
\item[]\noindent\hspace{-0.8cm}\vspace{-0.2cm}$D[S]$:  the subgraph of $D$ induced by $S$;
   \item[]   \noindent\hspace{-0.8cm}\vspace{-0.2cm}$S\Rightarrow T$:  $uv$ is an arc for all $u\in S$ and $v\in T$;
  \item[]\noindent\hspace{-0.8cm}\vspace{-0.2cm}$E(S,T)$: the set of arcs from $S$ to $T$;
  \item[] \noindent\hspace{-0.8cm}\vspace{-0.2cm}$E(S)$: the set of arcs from $S$ to $S$;
  \item[]\noindent\hspace{-0.8cm}\vspace{-0.2cm}$e(S,T)$: the cardinality of $E(S,T)$;
   \item[]\noindent\hspace{-0.8cm}\vspace{-0.2cm}$e(D)$: the size of the digraph $D$;
 \item[]\noindent\hspace{-0.8cm}\vspace{-0.2cm}$N^+(u)$: the  successor set of a vertex $u$, which is  $\{x\in V(D)|ux\in A\}$;
   \item[]\noindent\hspace{-0.8cm}\vspace{-0.2cm}$N^-(u)$:   the  predecessor set of a vertex $u$, which is  $\{x\in V(D)|xu\in A\}$;
    \item[]\noindent\hspace{-0.8cm}\vspace{-0.2cm}$N^+_{S}(u)$: the local  successor set of $u$ in $S$, which   is $\{x\in S|ux\in A\}$;
     \item[]\noindent\hspace{-0.8cm}\vspace{-0.2cm}$N^-_{S}(u)$: the local predecessor  set  of $u$ in $S$, which is $\{x\in S|xu\in A\}$;
      \item[]\noindent\hspace{-0.8cm}\vspace{-0.2cm}$N^+(T)$: the union of the successor sets of all vertices from $T$;
       \item[]\noindent\hspace{-0.8cm}\vspace{-0.2cm}$N^+_{S}(T)$: the union of the local successor sets of all vertices from $T$ in $S$;
\item[]\noindent\hspace{-0.8cm}\vspace{-0.2cm}$d^+(u)$: the  {\it outdegree}  of $u$, which is equal to $|N^+(u)|$;
   \item[]\noindent\hspace{-0.8cm}\vspace{-0.2cm}$d^-(u)$: the  {\it indegree}  of $u$,  which is equal to $|N^-(u)|$;
    \item[]\noindent\hspace{-0.8cm}\vspace{-0.2cm}$\Delta^+(D)$:  the maximum out-degree of $D$;
     \item[]\noindent\hspace{-0.8cm}\vspace{-0.2cm}$\Delta^-(D)$:  the maximum in-degree of $D$;
        \item[]\noindent\hspace{-0.8cm}\vspace{-0.2cm}$\Delta(D)$:  the maximum degree of $D$, i.e., $\Delta(D)=\max\{\Delta^+(D),\Delta^-(D)\}$;
   \item[]\noindent\hspace{-0.8cm}\vspace{-0.2cm}$\tau(u)$: the number of vertices in $D$ which are both successors and  predecessors of $u$, i.e., $$\tau(u)=|N^+(u)\cap N^-(u)|=e(N^+(u),\{u\});$$
    \item[]\noindent\hspace{-0.8cm}\vspace{-0.2cm}$\alpha(u)$: the maximum number  of local successors in $V\setminus N^+(u)$ of all vertices, i.e., $$\alpha(u)=\max\limits_{w\in V}e(\{w\},V\setminus N^+(u)).$$
     \end{itemize}
Given a vertex $u\in V$, we will also use $V_1(u)$ and $V_2(u)$ to denote $N^+(u)$ and $V\setminus V_1(u)$, respectively. Then $$\alpha(u)=\max\limits_{w\in V}e(\{w\},V_2(u)).$$
$V_1(u), V_2(u)$, $\alpha(u)$, $\tau(u)$ and $\Delta(D)$ will be abbreviated as $V_1, V_2$, $\alpha$, $\tau$ and $\Delta$  if no confusion arises.
	
For convenience, we denote by
\begin{equation*}\label{eq0}
		\phi(n,t)=
		\begin{cases}
		g(n,t)-1,&{\rm if~}\left\lfloor\frac{n-t}{2}\right\rfloor~{\rm is ~even;} \\
		g(n,t),&{\rm if~}\left\lfloor\frac{n-t}{2}\right\rfloor~{\rm is ~odd.}
		\end{cases}
		\end{equation*}
Then
\begin{equation}\label{equ1}
\phi(n,t)\ge \frac{n^2-t^2}{4}+tn-t.
\end{equation}
Now we present a lower bound on $ex(n,P_{t+1,2})$.
\begin{lemma}\label{lemma03}
Let $t\ge 2$ and  {$n\ge t+6$} be integers. Then
\begin{equation}\label{equ666}
ex(n,P_{t+1,2})\ge \phi(n,t).
\end{equation}
\end{lemma}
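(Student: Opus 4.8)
The plan is to prove the bound constructively: for each admissible pair $n,t$ I would exhibit an explicit $P_{t+1,2}$-free digraph $D^*$ on $n$ vertices having exactly $\phi(n,t)$ arcs. The guiding principle throughout is that a strict digraph is $P_{t+1,2}$-free precisely when $|N^+(u)\cap N^-(v)|\le t$ for every ordered pair $u\ne v$, since $N^+(u)\cap N^-(v)$ is exactly the set of midpoints of directed $2$-paths from $u$ to $v$. So the whole construction is organized around keeping this common successor/predecessor count at most $t$ for all pairs while packing in as many arcs as possible.

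The backbone is a complete bipartite orientation. I would partition $V=P\cup Q$ with $|P|=\lceil(n+t)/2\rceil$ and $|Q|=\lfloor(n-t)/2\rfloor$ and insert every arc from $P$ to $Q$, i.e. $P\Rightarrow Q$. In this subdigraph every vertex of $Q$ is a sink and every vertex of $P$ is a source, so it contains no $2$-path whatsoever and uses none of the allowed budget $t$; it already accounts for the first summand $\lceil(n+t)/2\rceil\lfloor(n-t)/2\rfloor$ of $g(n,t)$. The remaining job is to superimpose a further $tn+1$ (or $tn$) arcs without pushing any pair past $t$ common midpoints.

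Next I would add the two families of arcs that the skeleton makes room for. Fix a small core $P''\subseteq P$ of about $t$ vertices and set $P'=P\setminus P''$. The first family lies inside $P$: send arcs from every vertex of $P'$ into all of $P''$ and make $P''$ internally (almost) complete, keeping every $P$-internal out-degree at most $t$. The second family consists of back arcs forming a near $t$-regular set from $Q$ into $P'$. The verification of the defining inequality is then a short casework on where $u$ and $v$ lie: pairs $(P,Q)$ see only the $P$-internal out-degree of $u$; pairs inside $Q$ see only the $Q\to P$ out-degree of $u$; pairs inside $P$ see the $P$-internal $2$-paths plus the $Q\to P$ in-degree of $v$; and pairs $(Q,P)$ see only the composition of the two families. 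The design is chosen so these load essentially disjoint pairs: every $P$-internal $2$-path terminates in the core $P''$, while the back arcs are routed into $P'$, where no $P$-internal $2$-path ends; hence the two contributions never stack above $t$. A direct count at this stage yields $\lceil(n+t)/2\rceil\lfloor(n-t)/2\rfloor+tn-O(t)$ arcs.

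The genuinely delicate step, and the one I expect to be the main obstacle, is recovering the last $\Theta(t)$ arcs together with the extra $+1$ and the parity split. Raising the out-degrees on the $O(t)$-vertex core $P''$ all the way up to $t$ and inserting the single surplus arc cannot be done freely, because the obvious maximal choices of the digraph induced on $P''$ create new $2$-paths that consume exactly the budget reserved for the back arcs; one therefore has to balance the core structure against the back-arc in-degrees vertex by vertex. This is precisely where the parity of $\lfloor(n-t)/2\rfloor$ enters: it governs whether the core (and its matching to $Q$ via the near-regular back arcs) admits the balanced/regular configuration needed to absorb the final arc, and hence whether one attains $g(n,t)$ in the odd case or must stop one arc short at $g(n,t)-1$ in the even case. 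Once the count is matched to $\phi(n,t)$, the conclusion $ex(n,P_{t+1,2})\ge\phi(n,t)$ is immediate, and the only non-routine part of the argument is checking that this fine-tuned core still keeps every ordered pair at most $t$; the rest is the bookkeeping casework and the arithmetic of the arc count.
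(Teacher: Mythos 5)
Your setup is sound and matches the paper's strategy in spirit: the proof is indeed by explicit construction, your midpoint criterion ($D$ is $P_{t+1,2}$-free iff $|N^+(u)\cap N^-(v)|\le t$ for all ordered pairs $u\ne v$) is correct, and your skeleton is essentially the paper's: the paper's family $D_1$ consists of a big part $U_1$ of size $\lfloor(n-t)/2\rfloor$ dominating the remaining $\lceil(n+t)/2\rceil$ vertices (the reverse orientation of your $P\Rightarrow Q$, which is immaterial since $P_{t+1,2}$ is self-converse), a core of size $t+1$, and a near-$t$-regular family of back arcs. But there is a genuine gap, and it sits exactly where you flag it: you never exhibit the configuration achieving $\phi(n,t)$. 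Your stage-one count is honest --- with $|P''|=t$ the construction you describe yields $\lceil(n+t)/2\rceil\lfloor(n-t)/2\rfloor+tn-t$ arcs, i.e.\ $t$ short of $g(n,t)-1$ and $t+1$ short of $g(n,t)$ --- and the remaining arcs are precisely the content of the lemma. Recovering them is delicate because several pair types in your design are already tight at budget $t$ (e.g.\ for $u\in P'$, $v\in Q$ the midpoint set contains all of $P''$), so "balancing vertex by vertex" is not obviously possible; the paper gets the missing $\Theta(t)$ arcs by enlarging the core to a complete digraph on $t+1$ vertices arranged so that one core vertex ($U_4$) is dominated by the big part but does not feed the sinks, and by allowing core vertices to send back arcs subject to the sharp cap $d^+_{U_1}(u)\le 1$ --- non-obvious features absent from your sketch.

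More importantly, your guess about the parity mechanism does not match what actually works. You locate the parity in whether the $O(t)$-vertex core and its back-arc matching admit a "balanced/regular configuration", but in the paper's odd-case families $D_2$ and $D_3$ the extra arc is gained by a structurally different layout: perfect matchings of disjoint $2$-cycles (digons) are packed \emph{inside the two large parts} $U_1$ and $U_2$ (with $|U_1|=\lfloor(n-t)/2\rfloor-1$), which is possible precisely when these parts have even cardinality --- this is where the hypothesis that $\lfloor(n-t)/2\rfloor$ is odd enters --- and the back-arc scheme is rerouted (predecessors taken from $U_2$ only) to compensate. In your architecture the big parts are kept independent and every $P$-internal $2$-path ends in the core, and it is unclear (and unargued) that any fine-tuning within that architecture can exceed $g(n,t)-1$; a digraph family of a different shape is needed in the odd case. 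Finally, even granting a candidate configuration, the lemma still requires verifying $P_{t+1,2}$-freeness of the finished object, which the paper does case by case for three explicit families $D_1,D_2,D_3$; your proposal defers exactly this check for the part of the construction that is hardest to get right.
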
	
\begin{proof} It is sufficient to construct a $P_{t+1,2}$-free digraph  on $n$ vertices  with size $\phi(n,t)$.  	  Let $D_1$ be a set of  digraphs on $n$ vertices whose vertex set can be divided into four parts, say $U_1,U_2,U_3,U_4$, with
$$|U_1|=\left\lfloor \frac{n-t}{2}\right\rfloor,~ |U_2|=\left\lfloor \frac{n-t-1}{2}\right\rfloor, ~|U_3|=t, ~|U_4|=1.$$
Moreover, the arc set of digraphs in $D_1$ consists of the following arcs:
\begin{itemize}
\item[(i)]
$U_1\Rightarrow U_2\cup U_3\cup U_4$, $U_3\Rightarrow U_2$;
 \item[(ii)]$D[U_3\cup U_4]$ is a complete digraph on $t+1$ vertices;
  \item[(iii)]each vertex in $U_1$ has exactly $t$ predecessors from $U_2\cup U_3$ such that $d^+_{U_1}(u)\le t$ for all $u\in U_2$ and $d^+_{U_1}(u)\le 1$ for all $u\in U_3$.
\end{itemize}

If $\lfloor (n-t)/2\rfloor$ is odd and $\lfloor (n-t-1)/2\rfloor$ is even,  let $D_2$ be a set of digraphs whose vertex set can be divided into 6 parts, say $U_1,U_2,U_3,U_4,U_5,U_6$, with
$$|U_1|=\left\lfloor \frac{n-t}{2}\right\rfloor-1,~ |U_2|=\left\lfloor \frac{n-t-1}{2}\right\rfloor,~ |U_3|=t-2,~ |U_4|=|U_5|=1,~ |U_6|=2.$$
Moreover, the arc set of digraphs in $D_2$ consists of the following arcs.
\begin{itemize}
\item[(i)] $U_1\Rightarrow U_2\cup U_3\cup U_6$, $U_4\Rightarrow U_2\cup U_3\cup U_5\cup U_6$, $U_3\cup U_5\Rightarrow U_2\cup U_4$, $ U_6\Rightarrow U_4$;
\item[(ii)] $D[U_3\cup U_5\cup U_6]$ is a complete digraph on $t+1$ vertices,
 \item[(iii)] $D[U_1]$ consists of $[\lfloor (n-t)/2\rfloor-1]/2$ disjoint 2-cycles and  $D[U_2]$ consists of $\lfloor (n-t-1)/2\rfloor/2$ disjoint 2-cycles;
  \item[(iv)]     each vertex in $U_1$ has exactly $t$ predecessors from $U_2$ such that each vertex in $U_2$ has at most $t$ successors from $U_1$.
  \end{itemize}

If $\lfloor (n-t)/2\rfloor$ and $\lfloor (n-t-1)/2\rfloor$  are both odd, i.e., $\lfloor (n-t)/2\rfloor=\lfloor (n-t-1)/2\rfloor$,  let $D_3$ be a set of digraphs whose vertex set can be divided into 6 parts, say $U_1,U_2,U_3,U_4,U_5,U_6$, with
$$|U_1|=\left\lfloor \frac{n-t}{2}\right\rfloor-1,~|U_2|=\left\lfloor \frac{n-t}{2}\right\rfloor, ~|U_3|=t-1, ~|U_4|=|U_5|=|U_6|=1.$$
Moreover, the arc set of digraphs in $D_3$ consists of the following arcs.
\begin{itemize}
\item[(i)]$U_1\Rightarrow U_2\cup U_3\cup U_5$, $U_4\Rightarrow U_2\cup U_3\cup U_5\cup U_6$, $U_3\cup U_6\Rightarrow U_2\cup U_4$ ;
\item[(ii)]$D[U_3\cup U_5\cup U_6]$ is a complete digraph on $t+1$ vertices;
 \item[(iii)]$D[U_1]$ consists of $[\lfloor (n-t)/2\rfloor-1]/2$ disjoint 2-cycles  and  $D[U_2]$ is empty;
 \item[(iv)]the vertex in $U_4$ has exactly one predecessor from $U_2$ and each vertex in $U_1$ has exactly $t$ predecessors from $U_2$ such that each vertex in $U_2$ has at most $t$ successors from $U_1\cup U_4$.
\end{itemize}

One may check case by case that all the digraphs in $D_1\cup D_2\cup D_3$ are $P_{t+1,2}$-free.	
By direct computation, when $\lfloor (n-t)/2\rfloor$ is odd, each digraph  in $ D_2\cup D_3$ contains $\phi(n,t)$ arcs; when  $\lfloor (n-t)/2\rfloor$ is even, each digraph  in $ D_1$ contains $\phi(n,t)$ arcs. Hence, we obtain the lower bound   $ex(n,P_{t+1,2})\ge \phi(n,t)$.
\end{proof}

Next we  present some lemmas   needed in the proof of Theorem \ref{th1}.

	\begin{lemma}\label{le7}
		Let $D=(V,A)$ be a $P_{t+1,2}$-free digraph and let $u\notin S\subset V$. Then
\begin{equation}\label{equ2}
e\left(W,S\right)\le t|S| \quad {  for ~all}\quad W\subseteq N^+(u).
\end{equation}
	\end{lemma}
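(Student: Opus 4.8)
The plan is to localize the count to a single terminal vertex and then sum. First I would fix an arbitrary $s\in S$ and bound $e(W,\{s\})$, the number of arcs from $W$ to $s$. The key observation is that every such arc $ws$ with $w\in W$ combines with the arc $uw$ (which exists because $w\in W\subseteq N^+(u)$) to produce a directed $2$-path $uws$ from $u$ to $s$. Distinct vertices $w$ yield $2$-paths with distinct intermediate vertices, hence distinct $2$-paths all sharing the same initial vertex $u$ and the same terminal vertex $s$.

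Next I would invoke the hypothesis $u\notin S$ to guarantee $u\neq s$, so that $uws$ is a genuine directed path: its three vertices are distinct, since $w\neq u$ (as $w\in N^+(u)$ and $D$ has no loops) and $w\neq s$ (as $ws\in A$ again forbids a loop). Consequently, if we had $e(W,\{s\})\ge t+1$, the corresponding $t+1$ distinct $2$-paths from $u$ to $s$ would constitute a copy of $P_{t+1,2}$, contradicting the assumption that $D$ is $P_{t+1,2}$-free. Therefore $e(W,\{s\})\le t$ for every $s\in S$.

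Finally, summing this local estimate over all $s\in S$ gives
\[
e(W,S)=\sum_{s\in S}e(W,\{s\})\le t\,|S|,
\]
which is exactly \eqref{equ2}. I do not expect a serious obstacle in this argument; the only point requiring care is confirming that the produced $2$-paths are honest paths (three distinct vertices) and are pairwise distinct, which is precisely where the two hypotheses $W\subseteq N^+(u)$ and $u\notin S$ enter.
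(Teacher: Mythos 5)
Your proof is correct and is essentially the paper's argument: the paper applies pigeonhole to $e(W,S)\ge t|S|+1$ to find $t+1$ vertices of $W$ with a common successor in $S$, which is the contrapositive packaging of your per-vertex bound $e(W,\{s\})\le t$ summed over $s\in S$. Your extra care in checking that the $2$-paths $uws$ have three distinct vertices (using $u\notin S$ and the absence of loops) is a detail the paper leaves implicit, but it changes nothing substantive.
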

	
	\begin{proof} Suppose $e\left(W,S\right)\ge t|S|+1$.
		Then there are $t+1$ vertices in $W$ sharing a common successor in $S$, which implies that  $D$ is not $P_{t+1,2}$-free, a contradiction. Therefore, we have \eqref{equ2}.
\end{proof}

\begin{lemma}\label{le9} Let $D=(V,A)$ be  a $P_{t+1,2}$-free  digraph and let $v\in V$.  Then  each  $u\in V_2(v)\setminus \{v\}$ has at most $d^+(v)-\tau(v)+t$  successors from  $V_1(v)$.
	\end{lemma}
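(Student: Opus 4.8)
The plan is to bound $|N^+(u)\cap V_1(v)|$ directly by splitting the successors of $u$ that lie in $V_1(v)$ according to whether or not they send an arc back to $v$. Write $W=N^+(u)\cap V_1(v)$; this is precisely the set counted by $e(\{u\},V_1(v))$, so the goal is $|W|\le d^+(v)-\tau(v)+t$. I partition $W=W_1\cup W_2$, where $W_1=\{w\in W: w\in N^-(v)\}$ consists of those $w$ with $w\to v$, and $W_2=W\setminus W_1$ consists of those with $w\not\to v$.

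For $W_2$, each $w\in W_2$ is a successor of $v$ (since $w\in V_1(v)=N^+(v)$) that is not a predecessor of $v$, so $W_2\subseteq N^+(v)\setminus N^-(v)$. Because $|N^+(v)\cap N^-(v)|=\tau(v)$ and $|N^+(v)|=d^+(v)$, this yields $|W_2|\le d^+(v)-\tau(v)$; note this estimate confines $W_2$ to a set that does not depend on $u$ at all. For $W_1$, I observe that every $w\in W_1$ produces a directed $2$-path $u\to w\to v$, and all of these share the initial vertex $u$ and terminal vertex $v$. Here $u\neq v$ since $u\in V_2(v)\setminus\{v\}$, and each such $w$ satisfies $w\neq u$ (as $w\in V_1(v)$ while $u\in V_2(v)$) and $w\neq v$ (as $v\notin N^+(v)$), so the intermediate vertices are genuinely distinct from the endpoints. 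Since $D$ is $P_{t+1,2}$-free there can be at most $t$ such intermediate vertices; formally, applying Lemma \ref{le7} with $S=\{v\}$ (legitimate because $u\notin\{v\}$) and $W_1\subseteq N^+(u)$ gives $|W_1|=e(W_1,\{v\})\le t|S|=t$.

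Combining the two bounds gives $|N^+(u)\cap V_1(v)|=|W_1|+|W_2|\le t+(d^+(v)-\tau(v))$, which is exactly the assertion. I do not expect any serious obstacle: the argument is purely a counting decomposition, and the only point requiring care is the bookkeeping that makes $\tau(v)$ appear, namely recognizing that the successors of $u$ in $V_1(v)$ which are \emph{not} also predecessors of $v$ are trapped inside the fixed set $N^+(v)\setminus N^-(v)$ of size $d^+(v)-\tau(v)$, while the remaining ones are exactly the midpoints of $2$-paths from $u$ to $v$ and are therefore capped at $t$ by the $P_{t+1,2}$-free hypothesis.
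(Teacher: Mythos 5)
Your proof is correct and takes essentially the same approach as the paper: the paper argues by contradiction, noting that if $u$ had $d^+(v)-\tau(v)+t+1$ successors in $V_1(v)$ then at least $t+1$ of them would lie in $N^+(v)\cap N^-(v)$, yielding $t+1$ paths $u\to v_i\to v$ and hence a forbidden $P_{t+1,2}$ --- which is precisely your $W_1$/$W_2$ decomposition run in reverse. The only cosmetic difference is that you invoke Lemma~\ref{le7} with $S=\{v\}$ for the bound $|W_1|\le t$, where the paper appeals to $P_{t+1,2}$-freeness directly.
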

\begin{proof} Suppose that there exists a vertex $u\in V_2(v)\setminus \{v\}$ with $d^+(v)-\tau(v)+t+1$  successors from $V_1(v)$. By the definition of $\tau(v)$, $u$ has at least $t+1$  distinct successors $v_1,v_2,\ldots, v_{t+1}$   from $N^+_{V_1(v)}(u)\cap N^-_{V_1(v)}(v)$. So we have the following $t+1$ paths of length 2:
		$$u\rightarrow v_i\rightarrow v \quad {\rm for}\quad i=1,2,\ldots, t+1.$$
Hence, $D$ contains a copy of $P_{t+1,2}$, a contradiction.
\end{proof}

The following lemma  is crucial.
\begin{lemma}\label{le12}
Let $t\ge 2$ and $n\ge  \max\{t^3+4t^2+3t+4,17t^2/2+30t+27\}$ be integers. Suppose $D\in EX(n,P_{t+1,2})$ with  $\Delta^+\ge \Delta^-$. Then
\begin{itemize}
  \item[(1)] for every vertex $v$ with  out-degree $\Delta$,  we have $\alpha(v)\le t$;
  \item[(2)] $\Delta=\left\lceil\frac{n+t}{2}\right\rceil$;
  \item[(3)] $\min\limits_{d^+(u)=\Delta}\tau(u)\le t+1$.
\end{itemize}

	\end{lemma}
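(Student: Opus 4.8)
The plan is to fix a vertex $v$ with $d^+(v)=\Delta$, work with the partition $V=V_1\cup V_2$ where $V_1=N^+(v)$ and $V_2=V\setminus V_1$ (so $v\in V_2$ and $|V_1|=\Delta$), and to drive all three conclusions from one master upper bound on $e(D)$ combined with the lower bound $e(D)\ge\phi(n,t)$ from Lemma \ref{lemma03}. Writing $e(D)=e(V_1,V_1)+e(V_1,V_2)+e(V_2,V_1)+e(V_2,V_2)$, I would bound the four pieces as follows. Lemma \ref{le7} with $S=V_1$ gives $e(V_1,V_1)\le t\Delta$; the same lemma with $S=V_2\setminus\{v\}$, together with $e(V_1,\{v\})=\tau(v)$, gives $e(V_1,V_2)\le t(n-\Delta-1)+\tau(v)$. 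Lemma \ref{le9} applied to the $n-\Delta-1$ vertices of $V_2\setminus\{v\}$, plus the $\Delta$ arcs out of $v$ itself, gives $e(V_2,V_1)\le \Delta+(n-\Delta-1)(\Delta-\tau(v)+t)$. Finally the definition of $\alpha(v)$ gives $e(V_2,V_2)\le(n-\Delta)\alpha(v)$. Summing and regrouping yields the master inequality
\[
e(D)\le \Delta(n-\Delta)-\tau(v)\,(n-\Delta-2)+(n-\Delta)\,\alpha(v)+t\,(2n-\Delta-2).
\]

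For part (1) the real difficulty is that $\alpha(v)$ enters this bound with a \emph{positive} sign, so the $P_{t+1,2}$-freeness alone cannot force $\alpha(v)\le t$; the maximality of $D$ must be used. I would argue by contradiction: if some $w$ has at least $t+1$ out-neighbours in $V_2$, then — because $v$ has \emph{no} out-neighbour in $V_2$ and yet has the largest out-degree — I would construct a strictly larger $P_{t+1,2}$-free digraph by adding a suitable arc $vy$ with $y\in V_2$. Adding $vy$ creates new $2$-paths only of the forms $v\to y\to z$ and $x\to v\to y$, so it is safe unless some pair $(v,z)$ with $z\in N^+(y)$ or some pair $(x,y)$ with $x\in N^-(v)$ is already saturated with exactly $t$ middle vertices. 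The main obstacle here is checking that among the many candidates $y$ guaranteed by $\alpha(v)\ge t+1$ at least one such addition is admissible (or, if all are blocked, that one can delete a single arc at a saturated pair and re-add two); this is exactly where the explicit lower bounds on $n$ provide the counting room.

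Once $\alpha(v)\le t$ is in hand, the master inequality reduces to a concrete function of $\Delta$ and $\tau(v)$, and for parts (2) and (3) I would feed in $e(D)\ge\phi(n,t)\ge\frac{n^2-t^2}{4}+tn-t$ from \eqref{equ1}. The subtle point, and the step I expect to be the hardest, is that this single inequality only localizes $\Delta$ to a window of width $O(\sqrt{n})$ around $(n+t)/2$: the bounds on $e(V_2,V_2)$ and on $\tau(v)$ are not simultaneously tight, so asymptotics alone will not yield an \emph{exact} value. I would therefore bootstrap from the near-equality. Equality throughout forces $D$ to be almost complete bipartite from $V_2$ to $V_1$, with nearly every vertex of $V_2$ having out-degree close to $\Delta$ and only about $\tau(v)$ in-neighbours inside $V_1$; feeding this structural picture back into the four arc-counts sharpens the error terms below $1$. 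At that resolution the parity of $\lfloor(n-t)/2\rfloor$ becomes decisive and pins $\Delta=\lceil(n+t)/2\rceil$, giving part (2).

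Finally, for part (3) I would revisit the sharpened inequality at $\Delta=\lceil(n+t)/2\rceil$. Since the remaining slack between the upper bound and $\phi(n,t)$ is now $o(n)$ while the coefficient $n-\Delta-2$ multiplying $\tau(v)$ is of order $n$, the surviving term $\tau(v)(n-\Delta-2)$ must be small; choosing the maximum-out-degree vertex that minimises $\tau$ then forces $\min_{d^+(u)=\Delta}\tau(u)\le t+1$. Throughout, the governing theme — and the principal obstacle — is converting the loose global count into exact statements, which is precisely the role played by the thresholds $n\ge t^3+4t^2+3t+4$ and $n\ge 17t^2/2+30t+27$ in absorbing all lower-order corrections.
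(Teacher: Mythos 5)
Your master inequality is a correct consolidation of Lemmas \ref{le7} and \ref{le9}, and the opening move --- playing a partitioned upper count of $e(D)$ against $e(D)\ge\phi(n,t)$ --- is indeed the paper's. But each of the three parts has a genuine gap. The fatal one is part (3): the arithmetic simply falls short. Substituting $\Delta=\lceil(n+t)/2\rceil$ and $\alpha(v)\le t$ into your inequality and using $\phi(n,t)\ge\lceil(n+t)/2\rceil\lfloor(n-t)/2\rfloor+tn$ gives only $\tau(v)\,(n-\Delta-2)\le 2t(n-\Delta-1)$, i.e.\ $\tau(v)\le 2t$ (up to an integrality-absorbed lower-order term). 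That is precisely the paper's \emph{intermediate} bound, not $t+1$; the coefficient $n-\Delta-2$ being of order $n$ does not help because the slack on the right is also of order $tn$. To bridge $2t$ and $t+1$ the paper needs the entire second half of its proof: assuming every maximum-out-degree vertex has $\tau\ge t+2$, it introduces slack variables $a,b,c$ for the three counts in your bound (eventually $a+b+c\le t$), deduces that all but at most $t$ vertices of $V_2$ have out-degree exactly $\Delta$ and dominate all of $V_1$ minus a small exceptional set, shows each such vertex has at least two predecessors inside $V_2$, and then counts $2$-paths inside $D[V_2]$ to produce a path $u_1u_2u_3$ with $u_1$ of full out-degree and $u_3$ having $t$ predecessors in $V_1$ all dominated by $u_1$ --- yielding $t+1$ paths from $u_1$ to $u_3$, a contradiction. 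Nothing in your near-equality sketch produces this combinatorial configuration.

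Part (2) is also not salvageable as described. The slack terms in your master inequality are genuinely of order $tn$ (e.g.\ $e(V_2,V_2)$ may sit anywhere in $[0,(n-\Delta)t]$), so ``sharpening the error terms below $1$'' by bootstrapping near-equality is impossible, and parity plays no role in pinning $\Delta$ --- it enters only the definition of $\phi(n,t)$. Tellingly, you never use the hypothesis $\Delta^+\ge\Delta^-$, which is indispensable: the paper obtains the lower bound $\Delta\ge\lceil(n+t)/2\rceil$ by showing that nearly every vertex of $V_2$ dominates nearly all of $V_1$ while most vertices of $V_1$ also receive $t$ arcs from $V_1$, whence by pigeonhole some $w\in V_1$ attains the maximum possible in-degree $n-\Delta+t$; then $\Delta=\Delta^+\ge\Delta^-\ge n-\Delta+t$. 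Only after that does a counting upper bound (with $\tau\le 2t$) close the window from above. Finally, for part (1) your arc-switching idea collapses at exactly the point you flag: since $D\in EX(n,P_{t+1,2})$ already has maximum size, \emph{every} single-arc addition $vy$ is automatically blocked --- extremality makes your primary case vacuous --- so the whole argument rests on the delete-one/re-add-two fallback, which is not developed, and your hypothesis $\alpha(v)\ge t+1$ is never actually invoked to exhibit an admissible modification. The paper avoids switching altogether: it first proves $\alpha_2\le t$ by a global count (a vertex $u_0$ with $t+1$ successors inside $V_2$ would force $e(D)$ below $\phi(n,t)$), bootstraps $\beta=\Delta$, $\tau\le 2t$ and $|\Delta-(n+t)/2|<3$, and then gets $\alpha_1\le t$ because $t+1$ successors in $V_2$ of a vertex of $V_1$ would have combined out-degree into $V_1$ exceeding $t|V_1|$, forcing a common successor and hence a copy of $P_{t+1,2}$. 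As it stands, only your preliminary window $|\Delta-n/2|=O(\sqrt{tn}\,)$ and the bound $\tau\le 2t$ survive scrutiny.
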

	
	\begin{proof} Let $v$ be a vertex with out-degree $\Delta$ such that $\tau(v)=\min\limits_{d^+(u)=\Delta}\tau(u)$. Denote by
$$\tau=\tau(v), \alpha_1=\max\limits_{u\in V_1}e(u,V_2), \alpha_2=\max\limits_{u\in V_2}e(u,V_2), {\rm ~and ~}\beta=\max\limits_{u\in V_2\setminus \{v\}}d^+(u). $$
Then we have  $\alpha=\max\{\alpha_1,\alpha_2\}$.

The values of these parameters are entangled with each other so that we can not obtain each of their bounds by one step.  We follow a strategy as follows. At first we give a rough bound on $\Delta$. Using it, we get a bound on $\tau$, which can improve the bound on $\Delta$ in return. Then we  derive upper bounds on $\alpha_1$ and $\alpha_2$, which may be used as new conditions to provide sharper bounds on $\Delta$ and $\tau$.

It is obvious that
		\begin{equation}\label{equ:3.1}
		e(D)= e(V_2,V)+e(V_1,V)=\sum\limits_{u\in V_2}d^+(u)+\sum\limits_{u\in V}e(V_1,\{u\}).
		\end{equation}
		We assert that
		\begin{equation}\label{equ:3.3}
		\tau\le \Delta-\beta+\alpha_2+t.
		\end{equation}
		In fact, if $\beta\le \alpha_2+t$, (\ref{equ:3.3}) holds trivially. If $\beta> \alpha_2+t$, choose a vertex  $u\in V_2\setminus\{v\}$ with $d^+(u)=\beta$. Then $u$ has at least $\beta-\alpha_2$ successors in $V_1$. Applying Lemma \ref{le9} we get $\beta-\alpha_2\le \Delta-\tau+t,$ which leads to (\ref{equ:3.3}).	
		
Firstly we present  lower and upper bounds on $\Delta$. By the definition of $\tau$, we have
\begin{equation}\label{eq12}
\tau\le \Delta.
\end{equation}
Since  $D$ is $P_{t+1,2}$-free, every vertex in $V\setminus\{v\}$ has at most $t$ predecessors from $V_1(v)$. Hence,
\begin{equation}\label{eq2.07}
\sum\limits_{u\in V}e(V_1,\{u\})\le t(n-1)+\tau.
\end{equation}
Combining this with  (\ref{equ1}), \eqref{equ666}, (\ref{equ:3.1}) and \eqref{eq12}, we obtain
\begin{eqnarray}\label{eq3}
0&\le&\Delta(n-\Delta)+t(n-1)+\tau-\frac{n^2-t^2}{4}-tn+t\\
&\le&\Delta(n-\Delta)+t(n-1)+\Delta-\frac{n^2-t^2}{4}-t(n-1)\nonumber\\
&=&-(\Delta-\frac{n+1}{2})^2+\frac{(n+1)^2}{4}-\frac{n^2-t^2}{4}\nonumber.
\end{eqnarray}
It follows that
\begin{equation}
\frac{2n+1+t^2}{4}\ge(\Delta-\frac{n+1}{2})^2,
\end{equation}which implies
\begin{equation}\label{eq4}
\frac{n+1-\sqrt{2n+1+t^2}}{2}\le \Delta\le \frac{n+1+\sqrt{2n+1+t^2}}{2}.
\end{equation}
Recalling the bound on $n$, we have
\begin{equation}\label{eq8}
\frac{2}{5}n+\frac{1}{2}\le \Delta\le \frac{3}{5}n+\frac{1}{2}.
\end{equation}

Let $u_0\in V_2$ be a vertex  with $\alpha_2$ successors in $V_2$, i.e., $|N^+_{V_2}(u_0)|=\alpha_2$.  By (\ref{equ2}) and \eqref{eq2.07}, we have
$$ e\left(N^+_{V_2}(u_0),V\right)=e(N^+_{V_2}(u_0),V\setminus \{u_0\})+e(N^+_{V_2}(u_0), \{u_0\})\le t(n-1)+\alpha_2,$$
and
$$e(V_1,V)\le tn-t+\tau\le tn-t+\Delta.$$ Then
\begin{eqnarray}\label{eq10}
\hspace{2cm}e(D)&=& e(V_2\setminus N^+_{V_2}(u_0),V)+e(N^+_{V_2}(u_0),V)+e(V_1,V)\nonumber\\
&\le& (n-\Delta-\alpha_2)\Delta+t(n-1)+\alpha_2+tn-t+\Delta\nonumber\\
&\le& (n-\Delta-\alpha_2+2)\Delta+2t(n-1)\quad\quad\hspace{3.5cm}(since~\alpha_2\le \Delta)\nonumber\\
&=&-\left(\Delta-\frac{n+2-\alpha_2}{2}\right)^2+\frac{(n+2-\alpha_2)^2}{4}+2t(n-1)\nonumber\\
&\le& \frac{(n+2-\alpha_2)^2}{4}+2t(n-1).
\end{eqnarray}
It follows from \eqref{equ1} and (\ref{equ666}) that
\begin{equation}\label{eq1}
\frac{n^2-t^2}{4}+t(n-1)\le\frac{(n+2-\alpha_2)^2}{4}+2t(n-1).
\end{equation}
Solving the above inequality, we get
\begin{equation}\label{eq2}
\alpha_2\le n+2-\sqrt{n^2-t^2-4t(n-1)}\le 2t+3.
\end{equation}

		Now we show that
\begin{equation}\label{equ3}
\beta\ge \Delta-1.
\end{equation}
		Otherwise, if $\beta\le \Delta-2$, then
		\begin{eqnarray}\label{eq5}
			e(D)&= &\sum\limits_{u\in V_2\setminus\{v\}}d^+(u)+d^+(v)+e(V_1,V\setminus\{v\})+e(V_1,\{v\})\\
			&\le&  (n-\Delta-1)(\Delta-2)+\Delta+t(n-1)+\tau\nonumber\\
			&\le&  (n-\Delta-1)(\Delta-2)+\Delta+t(n-1)+\Delta\nonumber\\
			&=& -\left(\Delta-\frac{n+3}{2}\right)^2+\frac{n^2-2n+117}{4}+t(n-1)\nonumber\\
			&<&\phi(n,t)\nonumber,
		\end{eqnarray} a contradiction.

Combining (\ref{equ:3.3}), (\ref{eq2}) and (\ref{equ3}), we get an upper bound on the value of $\tau$:
\begin{equation}\label{eq7}
\tau\le 3t+4.
\end{equation}
We assert that
\begin{equation}\label{eq6}
\beta=\Delta. \tag{\ref{equ3}a}
\end{equation}
Suppose $\beta=\Delta-1$.   Using \eqref{eq7} and applying the same counting as in (\ref{eq5}), we have
\begin{eqnarray*}\label{}
			e(D)&= &\sum\limits_{u\in V_2\setminus\{v\}}d^+(u)+d^+(v)+e(V_1,V\setminus\{v\})+e(V_1,\{v\})\\
			&\le&  (n-\Delta-1)(\Delta-1)+\Delta+t(n-1)+\tau\nonumber\\
			&\le&  (n-\Delta-1)(\Delta-1)+\Delta+t(n-1)+3t+4\\
			&=& -\left(\Delta-\frac{n+1}{2}\right)^2+\frac{n^2-2n+21}{4}+tn+2t\nonumber\\
			&<&\phi(n,t)\nonumber,
		\end{eqnarray*} a contradiction.	Hence, we get (\ref{eq6}).

Next we improve  (\ref{eq8}). Substituting (\ref{eq7}) into (\ref{eq3}),  we obtain
\begin{equation}\label{eq11}
\frac{n-t}{2}-3<\Delta<\frac{n+t}{2}+3. \tag{\ref{eq8}a}
\end{equation}	
We claim that
\begin{equation}\label{eq9}
\alpha_2\le t.\tag{\ref{eq2}a}
\end{equation}
Recall  that $u_0\in V_2$ is a vertex  with $\alpha_2$ successors from $V_2$. By (\ref{equ2}), we give a new estimate of $e(N^+_{V_2}(u_0),V)$:
\begin{eqnarray*}
e(N^+_{V_2}(u_0),V)&\le& e(N^+_{V_2}(u_0),V_1)+e(N^+_{V_2}(u_0),V_2)\\
&\le&t\Delta+\alpha_2^2.
\end{eqnarray*}
Applying the same counting as in (\ref{eq10}), we have
\begin{eqnarray}
e(D)&=& e(V_2\setminus N^+_{V_2}(u_0),V)+e(N^+_{V_2}(u_0),V)+e(V_1,V)\nonumber\\
&\le& (n-\Delta-\alpha_2)\Delta+t\Delta+\alpha_2^2+tn-t+\tau\nonumber\\
&\le& -\Delta^2+(n-\alpha_2+t)\Delta+tn+\alpha_2^2-t+\tau\nonumber\\
&=&-\left(\Delta-\frac{n+t-\alpha_2}{2}\right)^2+\frac{(n+t-\alpha_2)^2}{4}+tn+\alpha_2^2-t+\tau\nonumber\\
&\le& \frac{(n+t-\alpha_2)^2}{4}+tn+\alpha_2^2-t+\tau\nonumber.
\end{eqnarray}
Suppose $\alpha_2\ge t+1$. Since  {$n\ge 17 t^2/2+30t+27$}, by \eqref{eq2} and \eqref{eq7} we have
$$n^2-(n+t-\alpha_2)^2\ge 2n-1> 4\alpha_2^2+t^2+4\tau,$$
which leads to
$$\frac{n^2-t^2}{4}+tn-tn> \frac{(n+t-\alpha_2)^2}{4}+tn+\alpha_2^2-t+\tau,$$
a contradiction. Hence, we get (\ref{eq9}).

By (\ref{eq9}) and \eqref{eq6}, it follows immediately from (\ref{equ:3.3}) that
\begin{equation}\label{eq14}
\tau\le 2t. \tag{\ref{eq7}a}
\end{equation}

Now we consider the precise value of $\Delta$. Since $D$ is $P_{t+1,2}$-free, we have
\begin{equation}\label{eq621}
e(V_2,V)\le \Delta(n-\Delta), \quad e(V_1)\le t\Delta,\quad e(V_1,V_2\setminus\{v\})\le t(n-\Delta-1).
\end{equation}
Let $a=\Delta(n-\Delta)-e(V_2,V)$, $b=t\Delta-e(V_1)$ and $c=t(n-\Delta-1)-e(V_1,V_2\setminus\{v\})$. It is clear that $a,b,c\ge 0$. By \eqref{equ1} and \eqref{equ666}, we have
\begin{eqnarray}\label{equ11}
a+b+c&\le& \Delta(n-\Delta)+t(n-1)+\tau-\phi(n,t)\\
&\le&\Delta(n-\Delta)+t(n-1)+\tau-\frac{n^2-t^2}{4}-t(n-1)\nonumber\\
&\le& \frac{t^2}{4}+2t.\nonumber
\end{eqnarray}

We assert that
\begin{equation}\label{equ7}
\alpha_1\le t.
\end{equation}
Otherwise, suppose $u\in V_1$ has $t+1$ successors from $V_2$, say $\{u_1,u_2,\ldots, u_{t+1}\}$. By \eqref{equ11}, we have
$$e(V_2,V)=\Delta(n-\Delta)-a\ge \Delta(n-\Delta)-\frac{t^2}{4}-2t,$$   which implies
 $$e(\{u_1,u_2,\ldots, u_{t+1}\},V)\ge (t+1)\Delta-\frac{t^2}{4}-2t.$$
 By \eqref{eq11} and \eqref{eq9}, we get $$e(\{u_1,u_2,\ldots, u_{t+1}\},V_1)\ge (t+1)\Delta-\frac{t^2}{4}-2t-t(t+1)>t\Delta.$$ It follows  that $u_1,u_2,\ldots, u_{t+1}$ share a common successor from $V_1$, say $w$. Then $D$ contains $t+1$ paths of length 2  from  $u$ to $w$, a contradiction.

Let $V':=\{w\in V_1|d^+_{V_1}(w)\ge \Delta-2t-t^2/4\}$. Since $e(V_1)\le t\Delta$ and {$n> t^3/2+9t^2/2+5t+7$}, by (\ref{eq11})  we have $|V'|\le t$.

 We Claim that
\begin{equation}\label{equ4}
e(V,V_1\setminus V')\ge (n-\Delta+t)(\Delta-|V'|)-\frac{t^3}{4}-\frac{9}{4}t^2-2t.
\end{equation}
Let $U=\{u_1,u_2,\ldots,u_i\}$ be the set of    vertices in $V_2$  with out-degrees less than $\Delta$. It is clear that $i\le a\le t^2/4+2t$. By \eqref{eq9} and \eqref{eq621}, we have $e(U,V_2)\le i\alpha_2\le it\le at$ and
\begin{eqnarray*}
e(U,V)&=&e(V_2,V)-e(V_2\setminus U, V)=\Delta(n-\Delta)-a-e(V_2\setminus U, V)\\
&\ge& \Delta(n-\Delta)-a-\Delta(n-\Delta-i)=i\Delta-a,
\end{eqnarray*}
which lead to
\begin{equation}\label{equ8}
e(U,V_1\setminus V')= e(U,V)-e(U,V_2)-e(U,V')\ge    i\Delta-a- at-i|V'|.
\end{equation}
For each vertex  $u\in V_2$ with $d^+(u)=\Delta$, we have
\begin{equation}\label{eq622}
d^+(w)\ge \Delta-2t-t^2/4 \quad{\rm for~~all}\quad w\notin N^+(u).
\end{equation} Otherwise, suppose $u_1\not\in N^+(u)$ with $d^+(u_1)<\Delta-2t-t^2/4$. Using the same arguments as for $\tau(v)\le 2t$, we get $\tau(u)\le 2t$.  Then we count the size of $D$ as
\begin{eqnarray}\label{eq623}
e(D)&=&e(N^+(u),V)+e(V\setminus N^+(u),V)\\
&=&e(N^+(u),V\setminus \{u\})+\tau(u)+\sum\limits_{w\notin N^+(u)\cup \{u_1\}}d^+(w)+d^+(u_1)\nonumber\\
&<&t(n-1)+2t+\Delta(n-\Delta)-2t-\frac{t^2}{4}\nonumber\\
&\le &\phi(n,t),\nonumber
\end{eqnarray} which contradicts $e(D)\ge \phi(n,t)$. Hence, we have   \eqref{eq622}, which leads to
    $V_1\setminus V'\subseteq N^+(u)$ for all $u\in V_2\setminus U$ and
\begin{equation}\label{equ9}
e(V_2\setminus U,V_1\setminus V')=(n-\Delta -i)|V_1\setminus V'|.
\end{equation}
Since $e(V_1)=t\Delta -b$, we have
\begin{equation}\label{equ10}
e(V_1,V_1\setminus V')\ge t|V_1\setminus V'|-b= t(|V_1|-|V'|)-b.
\end{equation}
Combining (\ref{equ8}), (\ref{equ9}), (\ref{equ10}) and $a+b\le t^2/4+2t$, we obtain (\ref{equ4}) as follows:
\begin{eqnarray*}
e(V,V_1\setminus V')&=&e(U,V_1\setminus V')+e(V_2\setminus U,V_1\setminus V')+e(V_1,V_1\setminus V')\\
&\ge&i\Delta-at-a-i|V'|+(n-\Delta-i)(|V_1|-|V'|)+t(|V_1|-|V'|)-b\\
&=&(n-\Delta+t)(|V_1|-|V'|)-at-(a+b)\\
&\ge&(n-\Delta+t)(|V_1|-|V'|)-t(t^2/4+2t)-(t^2/4+2t)\\
&=&(n-\Delta+t)(\Delta-|V'|)-\frac{t^3}{4}-\frac{9}{4}t^2-2t.
\end{eqnarray*}

\par
Since  {$n> t^3/2+9t^2/2+7t+6$}, by \eqref{eq11} we have
 $$\Delta-|V'|>\frac{t^3}{4}+\frac{9}{4}t^2+2t.$$ By the pigeonhole principal, there is a vertex $w\in V_1$ with $d^-(w)=n-\Delta+t$. Since $\Delta^+\ge \Delta^-$, we have $\Delta\ge n-\Delta+t$, which implies
\begin{equation}\label{equ667}
\Delta\ge \left\lceil\frac{n+t}{2}\right\rceil.\tag{\ref{eq8}b}
\end{equation}
On the other hand, by  (\ref{eq14}), we have
\begin{eqnarray*}
e(D)&=&e(V_2,V)+e(V_1,V\setminus\{v\})+\tau(v)\\
&\le&\Delta(n-\Delta)+t(n-1)+\tau\\
&\le&\Delta(n-\Delta)+t(n+1)\\
&=&-\left(\Delta-\frac{n}{2}\right)^2+\frac{n^2}{4}+t(n+1).
\end{eqnarray*}
Combining this with  \eqref{equ666} and (\ref{equ667}), we obtain the exact value of $\Delta$:
\begin{equation}\label{equ14}
		\Delta= \left\lceil\frac{n+t}{2}\right\rceil.\tag{\ref{eq8}{c}}
\end{equation}
		
Recalling the definitions of $a,b,c$, by   (\ref{eq14}) and (\ref{equ14}), we can modify \eqref{equ11} to get
\begin{equation}\label{equ15}
a+b+c\le \tau-t\le t.\tag{\ref{equ11}a}
\end{equation}

Finally, we show the last result of this lemma
\begin{equation}\label{eq13}
\min\limits_{d^+(u)=\Delta}\tau(u)\le t+1.
\end{equation}

To the contrary, we assume
\begin{equation}\label{equ12}
\min\limits_{d^+(u)=\Delta}\tau(u)\ge t+2.
\end{equation}
Then for each vertex $u$ with $d^+(u)=\Delta$, we have $|N^-(u)\cap N^+(u)|\ge t+2$.
Define $$S:=\{u\in V_1|d^+(u)\ge \Delta-t\} \quad{\rm and}\quad  V_1^*=V_1\setminus S.$$
 Since $\alpha_1\le t$, we have $$d^+_{V_1}(u)= d^+(u)-d^+_{V_2}(u)\ge \Delta-2t \quad {\rm for~all} \quad u\in S,$$ which implies $S\subseteq V'$ and   $|S|\le t$.

 Define $$F:=\{u\in V_2|d^+(u)=\Delta\}.$$
We claim that
\begin{equation}\label{eq624}
F\subseteq N^-(u) \quad {\rm for ~all}\quad  u\in V_1^*.
\end{equation}
  Otherwise, suppose there exist $u_1\in V_1^*$ and  $u_2\in F$  such that   $u_2\notin N^-(u_1)$.   Then $d^+(u_1)<\Delta-t$. Similarly as in \eqref{eq623}, by (\ref{eq14}), we have
\begin{eqnarray*}
e(D)
&=&e(N^+(u_2),V\setminus \{u_2\})+\tau(u_2)+\sum\limits_{w\notin N^+(u_2)\cup \{u_1\}}d^+(w)+d^+(u_1)\\
&<&t(n-1)+\tau(u_2)+\Delta(n-\Delta)-t\\
&\le&tn+\Delta(n-\Delta)\\
&\le &\phi(n,t),
\end{eqnarray*}which contradicts $e(D)\ge \phi(n,t)$.

By the definition of $a$ and ({\ref{equ11}a}), we obtain
\begin{equation}\label{equ13}
|V_2\setminus F|=|V_2|-|F|\le a\le t,
\end{equation}
and hence $|F|\ge  n-\Delta-t\ge 2.$

We assert that $v\in N^+(S)$. Otherwise, suppose $v\not\in N^+(S)$. Since $\tau(v)\ge t+2$,   $v$ has at least $t+2$ successors from $V_1^*$.  By \eqref{eq624}, we can find a $P_{t+1,2}$ in $D$, a contradiction.

Moreover, by (\ref{equ12}), each vertex $u$ in $F\setminus \{v\}$ has at least 2 predecessors from $V_2$ as $e(V_1,u)\le t$,
 i.e.,
\begin{equation}\label{equ92}
d^-_{V_2}(u)\ge 2\quad {\rm for~ all}\quad u\in F\setminus \{v\}.
\end{equation}

Let $$V_2^*=\{u\in F\setminus \{v\}|d^-_{V_1^*}(u)=t\}.$$
By the definition of $c$, there are at most $c$ vertices in $F$ satisfying $d^-_{V_1}(u)<t$. Note that $v$ is the only vertex in $F$ with more than $t$ predecessors in $V_1$. By (\ref{equ15}), we have
$$|\{u\in F\mid d^-_{V_1}(u)=t\}|\ge |F|-c-1\ge |V_2|-a-c-1\ge |V_2|-t-1.$$
Since $\alpha_1\le t$ and $|S|\le t$, we have $N^+_{V_2}(S)\le t^2$.
Recall that $d^-_{V_1}(u)\le t$ for all $u\in V\setminus \{v\}$. We get
$$|V_2^*|\ge |\{u\in F|d^-_{V_1}(u)=t\}|-|N^+_{V_2}(S)|\ge |V_2|-t-1-t^2\ge \left\lfloor\frac{n-t}{2}\right\rfloor-t^2-t-1.$$

Let $$V_3:=N^-_{V_2}(V_2^*)\quad{\rm and}\quad V_4:=N^-_{V_2}(V_3).$$
By \eqref{equ92}, we have $e(V_3,V_2^*)\ge 2|V_2^*|$, which implies $|V_3|\ge 2|V_2^*|/t$, as $\alpha_2\le t$.
It follows from $(\ref{equ13})$ that
 $$|F\cap V_3|\ge 2|V_2^*|/t-t.$$
  Similarly, we have $$e(V_4,V_3)\ge e(V_4,V_3\cap F)\ge 4|V_2^*|/t-2t,$$ and hence $$|V_4|\ge 4|V_2^*|/t^2-2. $$

  Since $\alpha_2\le t$, by  (\ref{equ13}), we have $e(V_2\setminus F, V_2)\le t^2$, which implies
   $$|\{u\in V_2^*\mid d^-_F(u)\ge 2\}|\ge |V^*_2|-t^2.$$
   Since \eqref{equ92} guarantees  that each vertex in $\{u\in V_2^*\mid d^-_F(u)\ge 2\}$  is the terminal vertex of  at least two paths of length 2 in $D[V_2]$, there are at least $2|V_2^*|-2t^2$ paths of length 2 from $V_4$ to $V_2^*$.
   On the other hand, since $\alpha_2\le t$ and $|V_2\setminus F|\le t$,   $D[V_2]$ contains at most $|V_2\setminus F|\alpha_2^2\le    t^3$ paths of length 2 with the initial vertices from $V_2\setminus F$. Hence, the number of 2-paths from $F$ to $V_2^*$ is at least
   $2|V_2^*|-2t^2-t^3>0$,  since  {$n\ge t^3+4t^2+3t+4$}.

   Therefore,  there exists a path $u_1u_2u_3$ in $D[V_2]$ with $u_1\in  F$ and $u_3\in V_2^*$. By the definition of $V_2^*$, $u_3$ has $t$ predecessors $w_1,w_2,\ldots,w_t$ from $ V_1^*.$   By \eqref{eq624},   $u_1$ is a common predecessor of  $w_1,w_2,\ldots,w_t$. Combining with the path $u_1u_2u_3$, we find  $t+1$ paths of length 2 from $u_1$ to $u_3$, a contradiction. Hence, we obtain (\ref{eq13}).
\end{proof}

Now we are ready to present the proof of Theorem \ref{th1}.

\bigskip	
{\noindent\bf Proof of Theorem \ref{th1}.}	Let $D$ be a $P_{t+1,2}$-free digraphs of order $n$ with $ex(n,P_{t+1,2})$ arcs. Without loss of generality, we assume that $\Delta(D)=\Delta^+(D)$. Let $v\in V$   with $d^+(v)=\Delta$ and $\tau(v)=\min\limits_{d^+(u)=\Delta}\tau(u)$. 	By Lemma \ref{le12}, we have $\tau(v)\le t+1$ and $\Delta=\lceil (n+t)/2\rceil$.  By Lemma \ref{le7}, we have $e(V_1,V\setminus \{v\})\le t (n-1)$.

Now we count the size of $D$ as
\begin{eqnarray*}
e(D)&=&e(V_1,V)+e(V_2,V)\\
&=&e(V_1,V\setminus \{v\})+\tau(v)+e(V_2,V) \\
&\le&t(n-1)+t+1+|V_2|\Delta\\
&=&t(n-1)+t+1+\left(n-\left\lceil\frac{n+t}{2}\right\rceil\right)\left\lceil\frac{n+t}{2}\right\rceil \\
&=& \left\lceil \frac{n+t}{2}\right\rceil\left\lfloor \frac{n-t}{2}\right\rfloor+tn+1.
\end{eqnarray*}Combining this with (\ref{equ666}), we obtain Theorem \ref{th1}. This completes the proof. \qed
	
{\bf Remark.}  When $ \lfloor({n-t})/{2} \rfloor$ is even, there is a gap between the upper bound and the lower bound on $ex(n,P_{t+1,2})$. We are inclined to conjecture that  $ex(n,P_{t+1,2})=g(n,t)-1$.


\section*{Acknowledgement}
This work was supported by the National Natural Science Foundation of China (No.12171323), Guangdong Basic and Applied Basic Research Foundation (No. 2022A1515011995).





\end{document}